\newtheorem{theorem}{Theorem}[section]
\newtheorem{corollary}{Corollary}[section]
\begin{document}
\title
{\bf  Constructions of Compact Dupin Hypersurfaces with Non-constant Lie Curvatures}
\author
{Thomas E. Cecil}
\maketitle

\begin{abstract}
A hypersurface $M$ in the unit sphere $S^n \subset {\bf R}^{n+1}$ is Dupin if 
along each curvature surface of $M$, the corresponding principal curvature is constant.  If
the number $g$ of distinct principal curvatures is constant on $M$, then $M$ is called proper Dupin.
In this expository paper, we give a detailed description of two important types of constructions of compact proper Dupin hypersurfaces  in $S^n$. One construction was published in 1989 by Pinkall and Thorbergsson \cite{PT1}, and the second
was published in 1989 by Miyaoka and Ozawa \cite{MO}.  Both types of examples
have the property that they do not have constant Lie curvatures (Lie invariants discovered by Miyaoka \cite{Mi3}), which are the cross-ratios of the principal curvatures, taken four at a time.  Thus, these examples are not equivalent by a Lie sphere transformation to an isoparametric 
(constant principal curvatures) 
hypersurface in $S^n$.  So they are counterexamples to a conjecture of Cecil and Ryan \cite[p. 184]{CR7} in 1985
that every compact proper Dupin hypersurface 
in $S^n$ is equivalent to an isoparametric hypersurface by a Lie sphere transformation. 
\end{abstract}

\noindent
In this expository paper, we give a detailed description of two important types of constructions of compact proper Dupin hypersurfaces  in the unit sphere $S^n \subset {\bf R}^{n+1}$.  One construction was published in 1989 by Pinkall and Thorbergsson \cite{PT1}, and the second
was published in 1989 by Miyaoka and Ozawa \cite{MO}.   Both types of examples
have the property that they do not have constant Lie curvatures  (Lie invariants discovered by Miyaoka \cite{Mi3}), 
which are the cross-ratios of the principal curvatures, taken four at a time.  Thus, these examples are not equivalent by a Lie sphere transformation to an isoparametric 
hypersurface in $S^n$.  So they are counterexamples to a conjecture of Cecil and Ryan \cite[p. 184]{CR7} in 1985
that every compact proper Dupin hypersurface 
in $S^n$ is equivalent to an isoparametric hypersurface by a Lie sphere transformation.

In Section \ref{sec:1}, we give an introduction to the theory of Dupin hypersurfaces and the relevant results regarding compact proper Dupin hypersurfaces, including the relationship between Dupin and isoparametric hypersurfaces.  We also define the key notion of the Lie curvatures of a proper Dupin hypersurface.

In Section \ref{sec:2}, we give a brief description of the method for studying submanifolds of $S^n$ or Euclidean space
${\bf R}^n$ in the context of Lie sphere geometry.  This is necessary to define concepts such as ``Lie equivalent'' precisely, but the actual constructions of the examples in Section \ref{sec:3.7} do not require much use of Lie sphere geometry.

In Section \ref{sec:3.7}, we describe the constructions of Pinkall and Thorbergsson \cite{PT1}, and  Miyaoka and
Ozawa \cite{MO}, of compact proper Dupin hypersurfaces with non-constant Lie curvatures.  

The examples of 
Pinkall-Thorbergsson have $g=4$ principal curvatures, and they 
are certain types of deformations of OT-FKM-type isoparametric hypersurfaces (due to Ozeki-Takeuchi 
\cite{OT}--\cite{OT2}, and
Ferus-Karcher-M\"{u}nzner  \cite{FKM}), which have $g=4$ principal curvatures.  

The construction of Miyaoka-Ozawa yields examples with $g=4$, and also examples with $g=6$.
These examples are all hypersurfaces of the form $M^6 = h^{-1}(W^3) \subset S^7$,
where $h:S^7 \rightarrow S^4$ is the Hopf fibration, and $W^3 \subset S^4$
is a compact proper Dupin hypersurface that is not isoparametric.  If $W^3$ has 2 principal curvatures, then $M^6$ has 4 principal curvatures, and if $W^3$ has 3 principal curvatures, then $M^6$ has 6 principal curvatures.

In these notes, we will present both of these constructions in detail, following the treatment of these examples in the author's book \cite[pp. 112--123]{Cec1} very closely, and some passages are taken directly from that book.

\section{Dupin hypersurfaces}
\label{sec:1}
We first recall some basic definitions and results in the theory of Dupin hypersurfaces in $S^n$.
Let $f:M \rightarrow S^n$ be an oriented hypersurface with field of unit normals $\xi:M \rightarrow S^n$.  The 
{\em shape operator}
of $f$ at a point $x \in M$ is the symmetric linear transformation $A:T_xM \rightarrow T_xM$
defined by the equation
\begin{equation}
\label{eq:3.4.1}
df(AX) = - d\xi(X), \quad X \in T_xM.
\end{equation} 
The eigenvalues of $A$ are called the 
{\em principal curvatures}, and the corresponding eigenvectors are
called the {\em principal vectors}.  

We next recall the notion of a focal point  of an immersion.  For each
real number $t$, define a map
\begin{displaymath}
f_t:M \rightarrow S^n,
\end{displaymath}
by
\begin{equation}
\label{eq:3.4.2}
f_t = \cos t \ f + \sin t \ \xi.
\end{equation} 
For each $x \in M$, the point $f_t(x)$ lies an oriented distance $t$ along the normal geodesic
to $f(M)$ at $f(x)$.  A point $p = f_t(x)$ is called a 
{\em focal point of multiplicity} $m>0$ 
{\em of} $f$ {\em at} $x$ if the nullity of $df_t$ is equal to $m$ at $x$.  Geometrically, one thinks of focal points
as points where nearby normal geodesics intersect.  

It is well known that the location of focal points is related to the
principal curvatures.  Specifically, if $X \in T_xM$, then by equation \eqref{eq:3.4.1} we have
\begin{equation}
\label{eq:3.4.3}
df_t(X) = \cos t \ df(X) + \sin t \ d\xi(X) = df(\cos t \ X - \sin t \ AX).
\end{equation} 
Thus, $df_t(X)$ equals zero for $X\neq0$ if and only if $\cot t$ is a principal curvature of $f$ at $x$,
and $X$ is a corresponding principal vector.  Hence, $p = f_t(x)$ is a focal point of $f$ at $x$ of
multiplicity $m$ if and only if $\cot t$ is a principal curvature of multiplicity $m$ at $x$.  Note
that each principal curvature 
\begin{displaymath}
\kappa = \cot t, \quad 0<t<\pi,
\end{displaymath}
produces two distinct antipodal focal points
on the normal geodesic with parameter values $t$ and $t+\pi$.  

The hypersphere in $S^n$ centered at a focal point
$p$ and tangent to $f(M)$ at $f(x)$ is called a 
{\em curvature sphere} of $f$ at $x$.  The
two antipodal focal points determined by $\kappa$ are the two centers of the corresponding curvature sphere in $S^n$.
Thus, the correspondence between principal curvatures and curvature spheres is bijective.  The multiplicity
of the curvature sphere is by definition equal to the multiplicity of the corresponding principal curvature.

A {\em curvature surface} of $M$ is a smooth submanifold $S\subset M$
such that for each point $x \in S$, the tangent space
$T_xS$ is equal to a principal space (i.e., an eigenspace) of the shape operator
$A$ of $M$ at $x$. This generalizes the classical notion of a line of curvature 
for a principal curvature of multiplicity one.

A hypersurface $M$ is said to be {\em Dupin} if:

\begin{enumerate}
\item[(a)] along each curvature surface, the corresponding principal curvature is constant.
\end{enumerate}
Furthermore, a Dupin hypersurface $M$ is called {\em proper Dupin} if, in addition to Condition (a),
the following condition is satisfied:
\begin{enumerate}
\item[(b)] the number $g$ of distinct principal curvatures is constant on
$M$.
\end{enumerate}

These conditions are preserved by stereographic projection (see, for example, \cite[pp. 20--24]{CR8}), 
so this theory is essentially the same for hypersurfaces in 
$S^n$ or ${\bf R}^n$, and we will use whichever of the two ambient spaces is more convenient
in describing the examples.

It follows from the Codazzi equation that if a continuous principal curvature function $\mu$ has constant
multiplicity $m$ on a connected open subset $U \subset M$, then $\mu$ is a smooth function, and
the distribution $T_{\mu}$ of principal
spaces corresponding to $\mu$ is a smooth foliation whose leaves are the curvature surfaces corresponding to 
$\mu$ on $U$. This principal curvature function
$\mu$ is constant along each of its curvature surfaces in $U$
if and only if these curvature surfaces are open subsets
of $m$-dimensional great or small spheres in $S^n$ (see, for example, \cite[pp. 18--32]{CR8}).

There exist many examples of Dupin hypersurfaces that are not proper Dupin, because the number of 
distinct principal curvatures is not constant on the hypersurface.
This also results in curvature surfaces that are not leaves of a 
principal foliation.  For example, Pinkall \cite{P4} showed that
a tube $M^3$ in ${\bf R}^4$ of constant radius over a torus of revolution 
$T^2 \subset {\bf R}^3 \subset {\bf R}^4$ is Dupin, but not proper Dupin.
It has three distinct principal curvatures at all points, except for points on two 2-dimensional tori, at which 
there are only two distinct principal curvatures.
(See also \cite[p. 69]{Cec1}.)

An important concept that is closely related to the Dupin condition is the notion of a taut embedding
(see, for example, \cite{CR7}).
An embedding $\phi:V \rightarrow S^n$ 
of a compact, connected manifold $V$ into the unit sphere $S^n \subset {\bf R}^{n+1}$ 
is said to be {\em taut}, if every nondegenerate spherical distance function $d_p$, $p \in S^n$, is a perfect
Morse function on $V$, i.e., it 
has the minimum
number of critical points  on $V$ required by the Morse inequalities. 

In particular, Thorbergsson \cite{Th1} proved that a compact, proper Dupin hypersurface $M \subset S^n$ is taut.
(See \cite[pp. 65--74]{CR8} for more discussion of the relationship
between the concepts of Dupin and taut.) Several important facts about this relationship 
are used in Section \ref{sec:3.7} in describing the construction of 
compact proper Dupin hypersurfaces due to Miyaoka and Ozawa.

Clearly isoparametric  hypersurfaces (which have constant principal curvatures) in $S^n$ are proper Dupin, and so are
those hypersurfaces in ${\bf R}^n$ obtained from isoparametric hypersurfaces in $S^n$ via
stereographic projection (see, for example, \cite[pp. 28--30]{CR8}).  
In particular, the well-known ring cyclides of Dupin in ${\bf R}^3$ are obtained
in this way from a standard product torus $S^1(r) \times S^1(s)$ in $S^3$, where $r^2+s^2=1$.

The first examples of compact proper Dupin hypersurfaces are those that are equivalent by a Lie sphere transformation
to an isoparametric hypersurface in the sphere $S^n$.  (See Section \ref{sec:2} for more detail on the notion of Lie equivalence of hypersurfaces.)

M\"{u}nzner
\cite{Mu}--\cite{Mu2} showed that the number $g$ of distinct principal curvatures of an 
isoparametric hypersurface
in $S^n$ must be $1,2,3,4$ or 6.  Thorbergsson \cite{Th1} then
showed that the same restriction holds for a compact proper Dupin hypersurface $M^{n-1}$ embedded in $S^n$.  
He first proved that $M^{n-1}$ must be taut
in $S^n$. Using tautness, he then showed that $M^{n-1}$ divides $S^n$
into two ball bundles over the first focal submanifolds on either side of $M^{n-1}$ in $S^n$.  This topological data
is all that is required for M\"{u}nzner's restriction on $g$.

Compact proper Dupin hypersurfaces in $S^n$ have been classified in the cases $g = 1,2$ and 3.  In each case, 
the hypersurface $M^{n-1}$ must be Lie equivalent 
to an isoparametric hypersurface.  The case $g = 1$ is simply the case of umbilic
hypersurfaces, 
i.e., hyperspheres in $S^n$.  In the case $g=2$, Cecil and Ryan \cite{CR2} showed that $M^{n-1}$
must be a cyclide of Dupin, and thus it is M\"{o}bius equivalent to a standard product of spheres
\begin{displaymath}
S^k (r) \times S^{n-1-k} (s) \subset S^n (1) \subset {\bf R}^{n+1}, \quad r^2 + s^2 = 1.
\end{displaymath}

In the case $g=3$, Miyaoka
\cite{Mi1} proved that $M^{n-1}$ must be Lie equivalent (although not necessarily M\"{o}bius equivalent)
to an isoparametric hypersurface
(see also \cite{CCJ2}).  Cartan \cite{Car3} had shown previously that an isoparametric hypersurface with $g=3$
principal curvatures is a tube over a standard 
embedding of a projective plane ${\bf FP}^2$,
for ${\bf F} = {\bf R}, {\bf C}, {\bf H}$ (quaternions) or ${\bf O}$ (Cayley numbers), 
in $S^4, S^7, S^{13}$
and $S^{25}$, respectively. 

These results led to the conjecture of Cecil and Ryan \cite[p. 184]{CR7} in 1985
that every compact connected proper Dupin hypersurface embedded in $S^n$ is Lie equivalent to an isoparametric
hypersurface. 

For most of the 1980's, all attempts to verify this conjecture in the cases of $g=4$ and 6 principal curvatures were unsuccessful.  
Finally, in papers published in 1989,
Pinkall and Thorbergsson 
\cite{PT1}, and Miyaoka and Ozawa \cite{MO}, gave two different constructions for producing
counterexamples to the conjecture with $g =4$ principal curvatures.  The method of Miyaoka and Ozawa also yields
counterexamples to the conjecture in the case $g=6$.  (See the author's survey article \cite{Cec10} for a description of progress on the conjecture of Cecil and Ryan.)

For both of these constructions, it is important to consider the Lie curvatures of the hypersurface.  These
are the cross-ratios of the principal curvatures taken four at a time. Specifically,
for four distinct numbers $a,b,c,d$ in ${\bf R} \cup \{\infty\}$, we adopt the notation
\begin{equation}
\label{eq:3.5.4}
[a,b;c,d] = \frac{(a-b)(d-c)}{(a-c)(d-b)}
\end{equation} 
for the cross-ratio of $a,b,c,d$.  We use the usual conventions involving operations with $\infty$.
For example, if $d= \infty$, then the expression $(d-c)/(d-b)$ evaluates to one,
and the cross-ratio $[a,b;c,d]$ equals $(a-b)/(a-c)$.

Let $M^{n-1} \subset S^n$ be a proper Dupin hypersurface, and suppose
that $\kappa_1,\ldots,\kappa_g, \ g \geq 4,$ are the distinct principal curvatures of $M^{n-1}$ at a point
$x \in M^{n-1}$. Then for any choice of four numbers $h,i,j,k$ from the set $\{1,\ldots,g\}$, the cross-ratio
\begin{equation}
\label{eq:3.5.5}
[\kappa_h,\kappa_i;\kappa_j, \kappa_k]
\end{equation} 
is called a {\em Lie curvature} of $M^{n-1}$ at $x$.
The Lie curvatures were discovered by Miyaoka \cite{Mi3}, who proved Miyaoka's Theorem that the Lie curvatures
are naturally invariant under the action of a Lie sphere transformation on a hypersurface, as we will discuss in Section \ref{sec:2}.

The constructions of Pinkall-Thorbergsson and Miyaoka-Ozawa produce examples where 
the Lie curvatures are not constant, and so these examples cannot be Lie  equivalent to an isoparametric hypersurface, which must have constant Lie curvatures,
since the principal curvatures themselves are constant.

\section{Submanifolds in Lie sphere geometry}
\label{sec:2}
We now give a brief description of the method for studying submanifolds
in $S^n$ or ${\bf R}^n$ within the context of Lie sphere geometry
(introduced by Lie \cite{Lie}).  The reader is referred to the papers of
Pinkall \cite{P4}, Chern \cite{Chern}, Cecil and Chern \cite{CC1}--\cite{CC2}, Cecil and Jensen \cite{CJ2},
or the books of Cecil \cite{Cec1}, or
Jensen, Musso and Nicolodi \cite{Jensen-Musso-Nicolodi}, for more detail.

Lie sphere geometry is situated in real projective space ${\bf P}^n$,
so we now briefly review some concepts and notation from projective geometry.
We define an equivalence relation on ${\bf R}^{n+1} - \{0\}$ by setting
$x \simeq y$ if $x = ty$ for some nonzero real number $t$.  We denote
the equivalence class determined by a vector $x$ by $[x]$.  Projective
space ${\bf P}^n$ is the set of such equivalence classes, and it can
naturally be identified with the space of all lines through the origin
in ${\bf R}^{n+1}$.  The rectangular coordinates $(x_1, \ldots, x_{n+1})$
are called {\em homogeneous coordinates}
of the point $[x]$ in ${\bf P}^n$, and they
are only determined up to a nonzero scalar multiple.  

A {\em Lie sphere} in $S^n$ is an oriented hypersphere or a point sphere in $S^n$.
The set of all Lie spheres
is in bijective correspondence with the set of all points $[x] = [(x_1,\ldots,x_{n+3})]$
in projective space ${\bf P}^{n+2}$ that lie
on the quadric hypersurface $Q^{n+1}$ 
determined by the equation 
$\langle x, x \rangle = 0$, where
\begin{equation}
\label{Lie-metric}
\langle x, y \rangle = -x_1 y_1 + x_2 y_2 + \cdots +x_{n+2} y_{n+2} - x_{n+3} y_{n+3}
\end{equation}
is a bilinear form of signature $(n+1,2)$ on ${\bf R}^{n+3}$.  
We let  ${\bf R}^{n+3}_2$ denote ${\bf R}^{n+3}$ endowed with
the indefinite inner product \eqref{Lie-metric}.  The quadric 
$Q^{n+1}$ in  ${\bf P}^{n+2}$  is called the {\em Lie quadric}.
We denote the standard basis in ${\bf R}^{n+3}_2$  by $\{e_1,\ldots,e_{n+3}\}$, where $e_1$ and $e_{n+3}$
are timelike unit vectors, and $\{e_2,\ldots,e_{n+2}\}$ are spacelike unit vectors.

The specific correspondence is as follows.  We identify $S^n$ with
with the unit sphere in ${\bf R}^{n+1} \subset {\bf R}^{n+3}_2$, where ${\bf R}^{n+1}$ is spanned by the 
standard basis vectors $\{e_2,\ldots,e_{n+2}\}$ in ${\bf R}^{n+3}_2$.
Then the oriented hypersphere with center $p \in S^n$ and
signed radius $\rho$ corresponds to the point in $Q^{n+1}$ with homogeneous coordinates,
\begin{equation}
\label{eq:1.4.4}
(\cos \rho , p, \sin \rho ),
\end{equation}
where $ - \pi < \rho < \pi$.

We can designate the orientation of the sphere by the sign of
$\rho$ as follows.  A positive radius $\rho$ in 
\eqref{eq:1.4.4} corresponds to the orientation of the sphere given by 
the field of unit normals which are tangent vectors to geodesics in $S^n$ 
going from $-p$ to $p$, and a negative radius corresponds
to the opposite orientation.
Each oriented sphere can be considered in two ways, with center $p$ and signed radius $\rho, - \pi < \rho < \pi$,
or with center $-p$ and the appropriate signed radius $\rho \pm \pi$.
Point spheres $p$ in $S^n$ correspond to those points $[(1, p, 0)]$ in $Q^{n+1}$ with radius $\rho = 0$.

Due to the signature of the metric $\langle \ ,\ \rangle$, the
Lie quadric $Q^{n+1}$ contains projective lines but no linear
subspaces of ${\bf P}^{n+2}$ of higher dimension (see, for example, \cite[p. 21]{Cec1}).  
A straightforward calculation
shows that if $[x]$ and $[y]$ are two points on the quadric, 
then the line $[x,y]$ 
lies on $Q^{n+1}$ if and only if $\langle x,y \rangle=0$.  Geometrically,
this condition means that the hyperspheres in $S^n$ corresponding
to $[x]$ and $[y]$ are in oriented contact, i.e., they are tangent to each other
and have the same orientation at the point of contact.
For a point sphere and an oriented sphere, oriented contact means that the point lies on the sphere.
The 1-parameter family
of Lie spheres  in $S^n$ corresponding to the points on a line on the Lie quadric is called a 
{\em parabolic pencil of spheres}.

If we wish to work in ${\bf R}^n$, the set of {\em Lie spheres} consists of all oriented hyperspheres, oriented hyperplanes, and point spheres in ${\bf R}^n \cup \{ \infty\}$.  As in the spherical case,
we can find a bijective correspondence between the set of all Lie spheres and the set of
points on $Q^{n+1}$, and the notions of oriented contact and parabolic pencils of
Lie spheres are defined in a natural way (see, 
for example, \cite[pp. 14--23]{Cec1}).

A {\em Lie sphere transformation} 
is a projective transformation of ${\bf P}^{n+2}$ which maps the Lie quadric $Q^{n+1}$
to itself.  In terms of the geometry of  $S^n$ or ${\bf R}^n$, 
a Lie sphere transformation maps Lie spheres to Lie spheres.
Furthermore, since a Lie sphere transformation maps lines on $Q^{n+1}$ to lines on $Q^{n+1}$, 
a Lie sphere transformation preserves oriented contact of Lie spheres  (see Pinkall \cite[p. 431]{P4}
or \cite[pp. 25--30]{Cec1}).

The group of Lie sphere transformations is
isomorphic to $O(n+1,2)/ \{ \pm I \}$, where $O(n+1,2)$
is the orthogonal group for the metric in equation \eqref{Lie-metric}.
A Lie sphere transformation that takes point spheres to point spheres is a {\em M\"{o}bius transformation}, i.e.,
it is induced by a conformal diffeomorphism of $S^n$, and the set of all M\"{o}bius transformations is a subgroup of the Lie sphere group.  

An example of a Lie sphere transformation that is not
a M\"{o}bius transformation is a {\em parallel transformation} $P_t$ given by the formula,
\begin{eqnarray}
\label{eq:3.4.9}
P_t e_1 & = & \cos t \ e_1 + \sin t \ e_{n+3}, \nonumber \\
P_t e_{n+3} & = & - \sin t \ e_1 + \cos t \ e_{n+3},\\
P_t e_i & = & e_i, \quad 2 \leq i \leq n+2. \nonumber
\end{eqnarray}
The transformation $P_t$ has the effect of adding $t$ to the signed radius
of each oriented sphere in $S^n$ while keeping the center
fixed (see \cite[pp. 25--49]{Cec1}).

The $(2n-1)$-dimensional manifold $\Lambda^{2n-1}$ of projective lines on the quadric $Q^{n+1}$ has a 
contact structure, i.e., a
$1$-form $\omega$ such that $\omega \wedge (d\omega)^{n-1}$ does not vanish on $\Lambda^{2n-1}$.  The condition $\omega = 0$ defines a codimension one distribution $D$ on $\Lambda^{2n-1}$ which has 
integral submanifolds
of dimension $n-1$, but none of higher dimension.  Such an integral  submanifold 
$\lambda: M^{n-1} \rightarrow \Lambda^{2n-1}$ of $D$ of dimension $n-1$ is called a 
{\em Legendre submanifold} (see \cite[pp. 51--64]{Cec1}).

An oriented hypersurface $f:M^{n-1} \rightarrow S^n$ with field of unit
normals $\xi :M^{n-1} \rightarrow S^n$ naturally induces
a Legendre submanifold $\lambda = [k_1, k_2]$, where 
\begin{equation}
k_1 = (1,f,0), \quad k_2 = (0,\xi ,1),
\end{equation}
in homogeneous coordinates.
For each $x \in M^{n-1}, [k_1(x)]$ is the unique point sphere and
$[k_2 (x)]$ is the unique great sphere in the parabolic pencil of spheres in $S^n$ corresponding to 
the points on the line $\lambda (x)$.  The Legendre submanifold $\lambda$ is called the {\em Legendre lift}
of the oriented hypersurface $f$ with field of unit normals $\xi$.

The case of an immersed submanifold $\phi :V \rightarrow S^n$ of codimension 
greater than one is handled in a similar way.
In that case, let
$B^{n-1}$ be the unit normal bundle\index{unit normal bundle}\index{normal!bundle}
\index{normal!unit|see{unit normal bundle}}
 of the submanifold $\phi$.  Then $B^{n-1}$ can be considered to be the
submanifold of $V \times S^n$ given by
\begin{displaymath}
B^{n-1} = \{ (x,\xi)| \phi(x) \cdot \xi = 0, \ d\phi(X) \cdot \xi = 0,\  \mbox{\rm for all }X \in T_xV\},
\end{displaymath} 
where the dot indicates the Euclidean inner product on ${\bf R}^{n+1}$.

The {\em Legendre lift} of the submanifold $\phi$
is the map $\lambda: B^{n-1} \rightarrow \Lambda^{2n-1}$ defined by
\begin{equation}
\label{eq:3.3.2}
\lambda(x,\xi) = [k_1(x,\xi), k_2 (x,\xi)],
\end{equation} 
where
\begin{equation}
\label{eq:3.3.3}
k_1(x,\xi) = (1, \phi(x), 0), \quad k_2(x,\xi) = (0, \xi, 1).
\end{equation} 
Geometrically, $\lambda(x,\xi)$ is the line on the quadric $Q^{n+1}$ corresponding to the 
parabolic pencil
of spheres in $S^n$ in oriented contact at the point $(\phi(x),\xi)$ in the unit tangent bundle $T_1S^n$ of $S^n$.

If $\beta$  is a Lie sphere transformation, then $\beta$ maps lines on
$Q^{n+1}$ to lines on $Q^{n+1}$, so it naturally
induces a map $\widetilde{\beta }$ from 
$\Lambda ^{2n-1}$ to itself.  If $\lambda$ is a
Legendre submanifold, then one can show that $\widetilde{\beta}\lambda$
is also a Legendre submanifold, which is denoted as
$\beta \lambda$ for short.  These two Legendre
submanifolds are said to be {\em Lie equivalent}.
We will also say that two submanifolds of $S^n$ or ${\bf R}^n$
are Lie equivalent, if their corresponding Legendre lifts
are Lie equivalent. 
 
If $\beta$ is a M\"{o}bius transformation, then the two Legendre
submanifolds  $\lambda$ and $\beta \lambda$ are said to be {\em M\"{o}bius equivalent}.
Finally, if $\beta$ is the parallel transformation $P_t$ in \eqref{eq:3.4.9} and
$\lambda$ is the Legendre lift of an
oriented hypersurface $f:M \rightarrow S^n$, then
$P_t\lambda$ is the Legendre lift of the
parallel hypersurface $f_{-t}$ at oriented distance $-t$ from $f$ (see, for example, \cite[p.67]{Cec1}).

It is easy to generalize the definitions of Dupin and proper Dupin hypersurfaces in $S^n$ to the class of Legendre submanifolds in Lie sphere geometry.  We simply replace the notion of a principal curvature (which is not Lie invariant)
with the notion of an oriented curvature sphere, which is Lie invariant when it is appropriately defined in Lie sphere geometry.

We then say that a Legendre submanifold $\lambda: M^{n-1} \rightarrow \Lambda^{2n-1}$
is a {\em Dupin submanifold} if:

\begin{enumerate}
\item[(a)] along each curvature surface, the corresponding
curvature sphere map is constant.
\end{enumerate}
Furthermore, a Dupin submanifold $\lambda$ is called {\em proper Dupin} if, in addition 
to Condition (a), the following condition is satisfied:

\begin{enumerate}
\item[(b)] the number $g$ of distinct curvature spheres is constant on $M$.
\end{enumerate}

One can easily show that a Lie sphere transformation $\beta$ maps curvature spheres of $\lambda$ to curvature
spheres of $\beta \lambda$, and that Conditions (a) and (b) are preserved by $\beta$ (see \cite[pp.67--70]{Cec1}).  
Thus, both the Dupin and proper Dupin properties are invariant under Lie sphere transformations.

\section{Construction of the Examples}
\label{sec:3.7} 
In this section, we describe the examples of Pinkall and Thorbergsson \cite{PT1}, and Miyaoka and Ozawa \cite{MO},
in detail, and we show that they do not have constant Lie curvatures.

The construction of Pinkall and Thorbergsson begins with an isoparametric hypersurface $M$
with four principal curvatures of 
OT-FKM-type constructed by Ozeki-Takeuchi \cite{OT}--\cite{OT2}, and Ferus, Karcher, and M\"{u}nzner \cite{FKM}.
The key is to consider one of the focal submanifolds $M_{+}$ of the family of isoparametric hypersurfaces.
We first recall some of the details of the OT-FKM construction (see also \cite[pp. 95--112]{Cec1}).\\

\noindent
{\bf Construction of Isoparametric Hypersurfaces of OT-FKM-type}\\

In a paper published in 1981, Ferus, Karcher and M\"{u}nzner
\cite{FKM} constructed an infinite class of 
isoparametric hypersurfaces with $g=4$ principal curvatures that includes all examples with $g=4$, except
for two homogeneous examples.
This construction is based on representations
of Clifford algebras, and the classification of such representations is an important element in the construction.
The FKM construction is a generalization of an earlier construction of Ozeki and Takeuchi\index{Ozeki-Takeuchi}
\cite{OT}--\cite{OT2}, who also used representations of certain Clifford algebras.  Thus, we refer to these examples as isoparametric hypersurfaces of OT-FKM-type.

The construction in the paper of 
Ferus, Karcher, and M\"{u}nzner \cite{FKM} focuses on the Cartan-M\"{u}nzner  polynomials that define the hypersurfaces,
although many other properties of the hypersurfaces are thoroughly developed.  

The following approach to the OT-FKM construction was given in the paper of Pinkall and Thorbergsson \cite{PT1}.  In this approach, one begins with one of the focal submanifolds of the family of isoparametric hypersurfaces and then defines the hypersurfaces as tubes over the focal submanifold.

Specifically, one
starts with a representation of the Clifford algebra 
$C_{m-1}$ on ${\bf R}^l$ determined by $l \times l$ skew-symmetric matrices
\begin{displaymath} 
E_1,\ldots,E_{m-1}
\end{displaymath}
satisfying the equations,
\begin{equation} 
\label{eq:3.7.1}
E_i^2 = -I, \quad E_i E_j = - E_j E_i, \quad i \neq j, \quad 1 \leq i,j \leq m-1.
\end{equation}
Note that the $E_i$ must also be orthogonal (see  \cite[p. 98]{Cec1}).

Two vectors $u$ and $v$ in ${\bf R}^l$ are said to be 
{\em Clifford orthogonal} if
\begin{equation} 
\label{eq:3.7.2}
u \cdot v = E_1 u \cdot v = \cdots = E_{m-1} u \cdot v = 0.
\end{equation}
One of the focal submanifolds of the isoparametric family that we are constructing  is given by
\begin{equation} 
\label{eq:3.7.3}
M_{+} = \{(u,v) \in S^{2l-1} \mid |u|= |v| = \frac{1}{\sqrt{2}}, u \cdot v = 0, E_i u \cdot v = 0, 1 \leq i \leq m-1 \},
\end{equation}
where $S^{2l-1}$ is the unit sphere in ${\bf R}^{2l} = {\bf R}^l \times {\bf R}^l$.
This manifold $M_{+}$ is the Clifford--Stiefel
manifold $V_2 (C_{m-1})$ of Clifford orthogonal 2-frames of length
$1/\sqrt{2}$ in ${\bf R}^l$.  Note that $M_{+} = V_2 (C_{m-1})$ is a submanifold of codimension $m+1$ in $S^{2l-1}$.

Ferus, Karcher, and M\"{u}nzner \cite{FKM} (see also \cite[pp.109--110]{Cec1}) 
showed that for any unit normal $\xi$ at any point $(u,v) \in V_2 (C_{m-1})$,
the shape operator $A_\xi$ has three distinct principal curvatures 
\begin{equation} 
\label{eq:3.7.4}
\kappa_1 = -1, \quad \kappa_2 = 0, \quad \kappa_3 = 1,
\end{equation}
with respective multiplicities $l-m-1$, $m$, $l-m-1$.  

The submanifold $V_2 (C_{m-1})$ of codimension $m+1$ in $S^{2l-1}$ has a Legendre lift
defined on the unit normal bundle 
$B(V_2 (C_{m-1}))$ of $V_2 (C_{m-1})$ in $S^{2l-1}$, as defined in Section \ref{sec:2} (see \cite[pp. 60--61]{Cec1}). 
Since $V_2 (C_{m-1})$ has codimension $m+1$ in $S^{2l-1}$,
this Legendre lift has a fourth principal curvature $\kappa_4 = \infty$ of multiplicity $m$ at each point of  
$B(V_2 (C_{m-1}))$ (see Theorem 4.15 of \cite[p. 74]{Cec1}).

For a  proper Dupin submanifold with $g$ distinct principal curvatures, we can order the principal curvatures so that 
\begin{equation}
\label{eq:3.5.9a}
\kappa_1 < \cdots < \kappa_g.
\end{equation}
In the case $g=4$, this leads to a unique Lie curvature $\Psi$ as defined in equation \eqref{eq:3.5.4} by
\begin{equation}
\label{eq:3.5.10}
\Psi = [\kappa_1, \kappa_2; \kappa_3, \kappa_4] = 
(\kappa_1 - \kappa_2)(\kappa_4 - \kappa_3)/(\kappa_1 - \kappa_3)(\kappa_4 - \kappa_2).
\end{equation}
The ordering of the principal curvatures implies that $\Psi$ satisfies $0 < \Psi < 1$.

Since $\kappa_4 = \infty$, the Lie curvature
$\Psi$ at any point of $B(V_2 (C_{m-1}))$
equals 
\begin{equation}
\label{eq:3.7.5}
\Psi =  (-1 - 0)(\infty - 1) / (-1 - 1)(\infty - 0) = 1/2,
\end{equation}
by the usual conventions involving $\infty$ mentioned after \eqref{eq:3.5.4} (see \cite[pp.72--82]{Cec1}).

Since all four principal curvatures are constant on $B(V_2 (C_{m-1}))$, 
a tube $M_t$ of spherical radius $t$,
where $0 < t < \pi$ and $t\notin \{\frac{\pi}{4}, \frac{\pi}{2}, 
\frac{3 \pi}{4} \}$, over $V_2 (C_{m-1})$ is an isoparametric hypersurface with four distinct principal curvatures
(see Corollary 4.41 of \cite[p. 111]{Cec1}).  These hypersurfaces $M_t$ form a family
of isoparametric hypersurfaces of OT-FKM-type.

Note that M\"{u}nzner
\cite{Mu} determined the values of the principal curvatures of any isoparametric hypersurface $M$
in $S^n$ with four principal curvatures.  From M\"{u}nzner's results, one can show that the Lie curvature $\Psi = 1/2$ on all of $M$, for any isoparametric hypersurface $M$
in $S^n$ with four principal curvatures. \\

\noindent
{\bf Construction of the Pinkall-Thorbergsson Examples}\\

We now begin the construction of compact proper Dupin hypersurfaces with non-constant Lie curvatures
due to Pinkall and Thorbergsson \cite{PT1}.  
These hypersurfaces are deformations of the isoparametric hypersurfaces of OT-FKM-type described above, and they have $g=4$ principal curvatures.

Given positive real numbers $\alpha$ and $\beta$ with
\begin{displaymath}
\alpha^2 + \beta^2 = 1, \quad \alpha \neq \frac{1}{\sqrt{2}}, \quad \beta \neq \frac{1}{\sqrt{2}},
\end{displaymath}
let 
\begin{displaymath}
T_{\alpha,\beta} : {\bf R}^{2l} \rightarrow {\bf R}^{2l},
\end{displaymath}
be the linear map defined by
\begin{displaymath}
T_{\alpha,\beta} (u,v) = \sqrt{2}\  (\alpha u, \beta v).
\end{displaymath}
Then for $(u,v) \in V_2 (C_{m-1})$, we have
\begin{displaymath}
|T_{\alpha,\beta} (u,v)|^2 = 2 (\alpha^2 (u \cdot u) + \beta^2 (v \cdot v)) = 2 (\frac{\alpha^2}{2} + \frac{\beta^2}{2})=1,
\end{displaymath}
and thus the image $V^{\alpha,\beta}_2 = T_{\alpha,\beta}V_2 (C_{m-1})$ is a submanifold of $S^{2l-1}$ of
codimension $m+1$ also.

Furthermore, we can see that the Legendre lift of $V^{\alpha,\beta}_2$ is proper Dupin 
as follows.  (For the sake of brevity, we
will say that a submanifold $V \subset S^n$ of codimension greater than one is proper 
Dupin if its Legendre lift is proper Dupin.)

We use the notion of curvature surfaces of a submanifold of codimension greater than one 
defined by Reckziegel \cite{Reck2}.
Specifically, suppose that $V \subset S^n$ is a submanifold of codimension greater than one, and let $B(V)$
denote its unit normal bundle. 
A connected submanifold $S \subset V$ is called a curvature surface if there exits a 
parallel section $\eta:S \rightarrow B(V)$ such that for each $x \in S$, 
the tangent space $T_xS$ is equal to some eigenspace of $A_{\eta (x)}$. The corresponding principal curvature
$\kappa :S \rightarrow {\bf R}$ is then a smooth function on $S$.  Reckziegel showed that if a principal curvature
$\kappa$ has constant multiplicity $\mu$ on $B(V)$ and is constant along each of its curvature surfaces,
then each of its curvature surfaces is an open subset of a $\mu$-dimensional metric sphere in $S^n$.
Since our particular submanifold $V_2 (C_{m-1})$ is compact, all of the curvature surfaces of 
the principal curvatures $\kappa_1$,
$\kappa_2$ and $\kappa_3$ given in equation \eqref{eq:3.7.4} are spheres of the appropriate dimensions in $S^{2l-1}$.

We now show that the Legendre lift of $V^{\alpha,\beta}_2$ is proper Dupin with four principal curvatures
\begin{equation}
\label{eq:Lie-curv-lambda}
\lambda_1 < \lambda_2 < \lambda_3 < \lambda_4.
\end{equation}
Since $V^{\alpha,\beta}_2$ has codimension $m+1$, the principal curvature $\lambda_4 = \infty$ has multiplicity $m$
and is constant along its curvature surfaces. To complete the proof that $V^{\alpha,\beta}_2$ is proper Dupin,
we establish a bijective correspondence between the other curvature surfaces of $V_2 (C_{m-1})$ and those of
$V^{\alpha,\beta}_2$.  Let $S$ be any curvature surface of $V_2 (C_{m-1})$.  Since $V_2 (C_{m-1})$ is compact
and proper Dupin, $S$ is a $\mu$-dimensional sphere, where $\mu$ is the multiplicity of the corresponding
principal curvature of $V_2 (C_{m-1})$.  Along the curvature surface $S$, the corresponding curvature sphere
$\Sigma$ is constant.  Note that $\Sigma$ is a hypersphere obtained by intersecting $S^{2l-1}$ with a hyperplane
$\pi$ that is tangent to $V_2 (C_{m-1})$ along $S$.  The image $T_{\alpha,\beta} (\pi)$ is a 
hyperplane that is tangent
to $V^{\alpha,\beta}_2$ along the $\mu$-dimensional sphere $T_{\alpha,\beta} (S)$.  Since the hypersphere
$T_{\alpha,\beta} (\pi) \cap S^{2l-1}$ is tangent to $V^{\alpha,\beta}_2$ along $T_{\alpha,\beta} (S)$, it is a curvature
sphere of $V^{\alpha,\beta}_2$ with multiplicity $\mu$, and $T_{\alpha,\beta} (S)$ is the corresponding curvature surface.

Thus, we have a bijective correspondence between the curvature surfaces of $V_2 (C_{m-1})$ and those of 
$V^{\alpha,\beta}_2$, and the Dupin condition
is clearly satisfied on $V^{\alpha,\beta}_2$.  Therefore,
$V^{\alpha,\beta}_2$ is a proper Dupin submanifold with four principal curvatures, including $\lambda_4 = \infty$.\\

\noindent
{\bf Computing the Lie curvature}\\

With the principal curvature functions defined as in equation (\ref{eq:Lie-curv-lambda}) and using the fact that 
$\lambda_4 = \infty$, there is a unique 
Lie curvature function $\Psi$ defined on $B(V^{\alpha,\beta}_2)$ by
\begin{equation}
\label{eq:3.7.5-a}
\Psi = \frac{\lambda_1 - \lambda_2}{\lambda_1 - \lambda_3}.
\end{equation}

We next show that the Legendre lift of $V^{\alpha,\beta}_2$ is not Lie equivalent to the Legendre lift of an isoparametric
hypersurface in $S^{2l-1}$ by showing that the Lie curvature 
$\Psi$ does not equal $1/2$ at some points of
the unit normal bundle $B(V^{\alpha,\beta}_2)$, as required for the Legendre lift of an isoparametric hypersurface.  
Moreover, we will show that the Lie curvature is not constant on $B(V^{\alpha,\beta}_2)$.

To compute the functions $\lambda_1 < \lambda_2 < \lambda_3$, we first note that
\begin{displaymath}
V^{\alpha,\beta}_2 \subset f^{-1}(0) \cap g^{-1}(0),
\end{displaymath}
where $f$ and $g$ are the real-valued functions defined on $S^{2l-1}$ by
\begin{equation}
\label{eq:def-f-g}
f(u,v) = \langle \frac{- \beta}{2 \alpha} u, u \rangle + \langle \frac{\alpha}{2 \beta} v, v \rangle, \quad g(u,v) = - \langle u, v \rangle.
\end{equation}
Thus, the gradients,
\begin{displaymath}
\xi = \left(\frac{- \beta}{\alpha} u, \frac{\alpha}{\beta} v \right), \quad \eta = (-v, -u), 
\end{displaymath}
of $f$ and $g$ are two unit normal vector fields on $V^{\alpha,\beta}_2$.  

By the construction of the OT-FKM-hypersurfaces, one can show that $l > m+1$ for the OT-FKM-hypersurfaces (see
Theorem 4.39 in the book \cite[pp. 105--106]{Cec1}), so we can choose $x,y \in {\bf R}^l$
such that
\begin{eqnarray}
\label{eq:3.7.6}
|x| & = & \alpha, \quad \langle x, u \rangle = 0, \quad \langle x, v \rangle = 0, \quad \langle x, E_i v \rangle = 0, \quad 1 \leq i \leq m-1, \nonumber \\
|y| & = & \beta, \quad \langle y, u \rangle = 0, \quad \langle y, v \rangle = 0, \quad \langle y, E_i u\rangle = 0, \quad 1 \leq i \leq m-1. \nonumber 
\end{eqnarray}
We define three curves,
\begin{eqnarray}
\label{eq:3.7.7}
\gamma (t) = (\cos t \ u + \sin t \ x, v), \quad \delta (t) = (u, \cos t \ v + \sin t \ y),\\
\varepsilon (t) = (\cos t \ u + \frac{\alpha}{\beta} \sin t \ v, - \frac{\beta}{\alpha} \sin t \ u + \cos t \ v ).\nonumber 
\end{eqnarray}
It is straightforward to check that each of these curves lies on $V^{\alpha,\beta}_2$ and goes through the point
$(u,v)$ when $t=0$.  Along $\gamma$, the normal vector $\xi$ is given by
\begin{displaymath}
\xi(t) = \left( - \frac{\beta}{\alpha} (\cos t \ u + \sin t \ x), \frac{\alpha}{\beta} v \right).
\end{displaymath}
Thus, the initial velocity vector to $\xi(t)$ satisfies
\begin{displaymath}
\overrightarrow{\xi} (0) = \left( - \frac{\beta}{\alpha} x, 0 \right) = - \frac{\beta}{\alpha} \overrightarrow{\gamma} (0).
\end{displaymath}
So $X = (x,0) = \overrightarrow{\gamma} (0)$ is a principal vector of $A_\xi$ at $(u,v)$ with corresponding principal curvature 
$\beta / \alpha$.

Similarly, $Y = (0,y) = \overrightarrow{\delta} (0)$ is a principal vector of $A_\xi$ at $(u,v)$ with corresponding principal curvature 
$- \alpha / \beta$.  Finally, along the curve $\varepsilon$, we have
\begin{displaymath}
\xi(t) = \left( - \frac{\beta}{\alpha} \left(\cos t \ u + \frac{\alpha}{\beta} \sin t \ v \right), 
\frac{\alpha}{\beta} \left(- \frac{\beta}{\alpha} \sin t \ u + \cos t \ v \right) \right).
\end{displaymath}
Then $\overrightarrow{\xi} (0) = (-v,-u) = \eta$, which is normal to $V^{\alpha,\beta}_2$ at $(u,v)$.  
Thus, we have $A_\xi Z = 0$,
for $Z = \overrightarrow{\varepsilon} (0)$, and $Z$ is a principal vector with corresponding principal curvature zero.  Therefore, at the point
$\xi (u,v)$ in $B(V^{\alpha,\beta}_2)$, there are four principal curvatures written in ascending order as in equation (\ref{eq:Lie-curv-lambda}) (recall that $\alpha$ and $\beta$ are positive),
\begin{equation}
\label{eq:pc-V-alpha-beta-1}
\lambda_1 = - \frac{\alpha}{\beta}, \quad \lambda_2 = 0, \quad \lambda_3 = \frac{\beta}{\alpha}, \quad \lambda_4 = \infty.
\end{equation}
At this point, the Lie curvature $\Psi$ is 
\begin{equation}
\label{eq:Lie-c-lambda-1}
\Psi =  \frac{\lambda_1 - \lambda_2}{\lambda_1 - \lambda_3} = \frac{- \alpha / \beta}{(- \alpha / \beta - \beta / \alpha )} = \alpha^2.
\end{equation}
Since $\alpha^2 \neq 1/2$, the Legendre lift of $V^{\alpha,\beta}_2$ is not 
Lie equivalent to an isoparametric
hypersurface.

Furthermore, since $A_{- \xi} = - A_{\xi}$, the principal curvatures at the point $- \xi (u,v)$ in $B(V^{\alpha,\beta}_2)$ are the negatives of those given in equation (\ref{eq:pc-V-alpha-beta-1}).
Thus, since the smooth principal curvature functions
$\lambda_1$, $\lambda_2$, $\lambda_3$ defined on $B(V^{\alpha,\beta}_2)$ by equation (\ref{eq:Lie-curv-lambda}) satisfy
$\lambda_1 < \lambda_2 < \lambda_3$, we have
\begin{equation}
\label{eq:pc-V-alpha-beta-2}
\lambda_1 = - \frac{\beta}{\alpha}, \quad \lambda_2 = 0, \quad \lambda_3 = \frac{\alpha}{\beta}, \quad \lambda_4 = \infty.
\end{equation}
at this point $- \xi (u,v)$ in $B(V^{\alpha,\beta}_2)$. Thus, at this point $- \xi (u,v)$, we have the Lie curvature,
\begin{equation}
\label{eq:Lie-c-lambda-2}
\Psi =  \frac{\lambda_1 - \lambda_2}{\lambda_1 - \lambda_3} = \frac{- \beta / \alpha}{(- \beta / \alpha - \alpha / \beta )} = \beta^2.
\end{equation}
Since $\beta^2 \neq \alpha^2$, the Lie curvature function $\Psi$ is not constant on $B(V^{\alpha,\beta}_2)$.

To obtain a compact proper Dupin hypersurface
in $S^{2l-1}$ with four principal curvatures that is
not Lie equivalent to an isoparametric hypersurface, one simply takes a tube $M$ over 
$V^{\alpha,\beta}_2$ in $S^{2l-1}$ of sufficiently small radius so that the tube is an embedded hypersurface.

In a related result,
Miyaoka \cite[Corollary 8.3, p. 252]{Mi3} proved that if
the Lie curvature $\Psi$ is constant on a compact, connected proper Dupin hypersurface with four principal curvatures,
then, in fact, $\Psi = 1/2$ on the hypersurface.\\

\noindent
{\bf Construction of the Miyaoka-Ozawa Examples}\\

Next we handle the construction of the examples due to Miyaoka and Ozawa
\cite{MO}.  The key ingredient here is the Hopf
fibration of $S^7$ over $S^4$.  Let ${\bf R}^8 = {\bf H} \times {\bf H}$, where ${\bf H}$ is the skew field of
quaternions.  
The Hopf fibering of the unit sphere $S^7$ in ${\bf R}^8$ over $S^4$ is given by
\begin{equation} 
\label{eq:3.7.9}
h(u,v) = (2 u \bar{v}, |u|^2 - |v|^2), \quad u,v \in {\bf H},
\end{equation}
where $\bar{v}$ is the conjugate of $v$ in ${\bf H}$.
One can easily compute that the image of $h$ lies in the unit sphere $S^4$ in the Euclidean space 
${\bf R}^5 = {\bf H} \times {\bf R}$.

Before beginning the construction of Miyaoka and Ozawa, we recall some facts about the Hopf fibration.  Suppose
$(w,t) \in S^4$, with $t \neq 1$, i.e., $(w,t)$ is not the point $(0,1)$.  We want to find the inverse image
of $(w,t)$ under $h$.  Suppose that
\begin{equation} 
\label{eq:3.7.10}
2 u \bar{v} = w, \quad |u|^2 - |v|^2 = t.
\end{equation}
Multiplying the first equation in \eqref{eq:3.7.10} by $v$ on the right, we obtain
\begin{equation} 
\label{eq:3.7.11}
2 u |v|^2 = wv, \quad 2u = \frac{w}{|v|} \frac{v}{|v|}.
\end{equation}
Since $|u|^2 + |v|^2 =1$, the second equation in \eqref{eq:3.7.10} yields
\begin{equation} 
\label{eq:3.7.12}
|v|^2 = (1-t)/2.
\end{equation}
If we write $z = v/|v|$, then $z \in S^3$, the unit sphere in ${\bf H} = {\bf R}^4$.  Then equations \eqref{eq:3.7.11} and
\eqref{eq:3.7.12} give
\begin{equation} 
\label{eq:3.7.13}
u = \frac{wz}{\sqrt{2(1-t)}}, \quad v = \sqrt{(1-t)/2}\  z, \quad z \in S^3.
\end{equation}
Thus, if $U$ is the open set $S^4 - \{(0,1)\}$, then $h^{-1}(U)$ is diffeomorphic to $U \times S^3$ by the formula
\eqref{eq:3.7.13}.  Of course, the second equation in \eqref{eq:3.7.10} shows that $h^{-1}(0,1)$ is just the 3-sphere
in $S^7$ determined by the equation $v=0$.  

We can find a similar local trivialization containing these points
with $v=0$ by beginning the process above with multiplication of equation \eqref{eq:3.7.10} by $\bar{u}$ on the left,
rather than by $v$ on the right.  As a consequence of this local triviality, if $M$ is an embedded submanifold
in $S^4$ which does not equal all of $S^4$, then $h^{-1}(M)$ is diffeomorphic to $M \times S^3$.  Finally, recall
that the Euclidean inner product on the space ${\bf R}^8 = {\bf H} \times {\bf H}$ is given by
\begin{equation} 
\label{eq:3.7.14}
(a,b)\cdot (u,v) = \Re (\bar{a} u + \bar{b} v),
\end{equation}
where $\Re w$ denotes the real part of the quaternion $w$.

The examples of Miyaoka and Ozawa all arise as inverse images under $h$ of 
proper Dupin hypersurfaces 
in $S^4$.  The
proof that these examples are proper Dupin is accomplished by first showing that they are 
taut, as defined in Section \ref{sec:1}. 
Thus, we begin with the following.

\begin{theorem}
\label{thm:3.7.1} 
Let $M$ be a compact, connected submanifold of $S^4$.  If $M$ is taut in $S^4$, then $h^{-1}(M)$ is taut in $S^7$.
\end{theorem}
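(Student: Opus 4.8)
\noindent
The plan is to deduce tautness of $h^{-1}(M)$ in $S^7$ from tautness of $M$ in $S^4$ by comparing, through the Hopf map, the Morse theory of the spherical distance functions on the two manifolds. After rescaling the metric on $S^4$ by a constant, $h$ is a Riemannian submersion whose fibers are totally geodesic $3$-spheres in $S^7$ (standard for the quaternionic Hopf fibration, and directly checkable from \eqref{eq:3.7.9}). First I would record the resulting infinitesimal picture: for $q\in h^{-1}(M)$ the tangent space splits orthogonally as $T_qh^{-1}(M)=\mathcal V_q\oplus \widehat{T_{h(q)}M}$, where $\mathcal V_q=\ker dh_q$ is tangent to the fiber through $q$, $\mathcal H_q=\mathcal V_q^{\perp}$ is the horizontal space, and $\widehat X$ denotes the horizontal lift through the isomorphism $dh_q:\mathcal H_q\to T_{h(q)}S^4$ of a vector $X$ tangent to $S^4$ at $h(q)$; moreover the normal space of $h^{-1}(M)$ in $S^7$ at $q$ is the horizontal lift of the normal space of $M$ in $S^4$ at $h(q)$.

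Next I would establish the critical-point correspondence. Fix $p\in S^7$ and set $\bar p=h(p)$. A point $q\in h^{-1}(M)$ is critical for $d_p|_{h^{-1}(M)}$ precisely when the minimizing geodesic from $q$ to $p$ leaves $q$ normal to $h^{-1}(M)$, hence horizontally; since horizontal geodesics project to geodesics, this geodesic projects to one from $h(q)$ to $\bar p$ leaving $h(q)$ normal to $M$, so $h(q)$ is critical for $d_{\bar p}|_M$. Conversely, over each critical point $\bar q$ of $d_{\bar p}|_M$ the fiber $F_{\bar q}=h^{-1}(\bar q)$ is a totally geodesic $3$-sphere on which $d_p$ restricts to a monotone function of a distance function of $S^3$, hence to a perfect Morse function with exactly one minimum and one maximum, of indices $0$ and $3$; I would then check that these two points are exactly the critical points of $d_p|_{h^{-1}(M)}$ lying over $\bar q$, that there the Hessian of $d_p|_{h^{-1}(M)}$ is block-diagonal for the splitting $\mathcal V_q\oplus\widehat{T_{\bar q}M}$, with vertical block the Hessian of $d_p|_{F_{\bar q}}$ and horizontal block a conformal copy of the Hessian of $d_{\bar p}|_M$ at $\bar q$. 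It follows that $d_p|_{h^{-1}(M)}$ is nondegenerate if and only if $d_{\bar p}|_M$ is, and that in that case
\[
\mu_i\bigl(d_p,\,h^{-1}(M)\bigr)=\mu_i\bigl(d_{\bar p},\,M\bigr)+\mu_{i-3}\bigl(d_{\bar p},\,M\bigr)\qquad\text{for all }i .
\]

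To finish: if $M=S^4$ then $h^{-1}(M)=S^7$ is trivially taut, so assume $M\neq S^4$. By the local triviality of $h$ recorded just before the theorem, $h^{-1}(M)$ is diffeomorphic to $M\times S^3$, so the K\"unneth formula gives $b_i(h^{-1}(M);{\bf Z}_2)=b_i(M;{\bf Z}_2)+b_{i-3}(M;{\bf Z}_2)$ for all $i$. Now let $d_p$ be any nondegenerate distance function on $h^{-1}(M)$. By the previous paragraph $d_{\bar p}|_M$ is nondegenerate, and tautness of $M$ gives $\mu_i(d_{\bar p},M)=b_i(M;{\bf Z}_2)$ for every $i$; combining with the index formula, $\mu_i(d_p,h^{-1}(M))=b_i(M;{\bf Z}_2)+b_{i-3}(M;{\bf Z}_2)=b_i(h^{-1}(M);{\bf Z}_2)$, so $d_p$ attains the minimum number of critical points allowed by the Morse inequalities. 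Since $p$ was arbitrary, $h^{-1}(M)$ is taut in $S^7$.

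I expect the main obstacle to be the local computation in the second paragraph: verifying that the Hessian of $d_p|_{h^{-1}(M)}$ at a critical point really decouples along the vertical/horizontal splitting, with horizontal block conformal to the Hessian of $d_{\bar p}|_M$ and vertical block of index $0$ or $3$. This is where the O'Neill submersion identities, the total geodesy of the fibers, and the behavior of second fundamental forms under horizontal lifting have to be used carefully; it must also be checked to survive the borderline case $p\in F_{\bar q}$, where $d_p$ fails to be differentiable at its minimum on that fiber but that minimum is still counted with index $0$.
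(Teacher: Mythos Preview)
Your outline is a genuinely different route from the paper's, and it is a reasonable one; but the point you flag as ``the main obstacle'' is exactly where the whole proof lives, and the paper avoids that obstacle entirely.

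The paper does not work with spherical distance functions or with the Riemannian-submersion structure of $h$. Instead it passes to tightness and computes with linear height functions in the explicit trivialization \eqref{eq:3.7.13}: writing $f_{ab}(w,t,z)=\alpha(w,t)\cdot\bar z$, one reads off that on each fiber $f_{ab}$ has exactly two critical points (when $\alpha\neq0$), and then the quaternionic identity
\[
|\alpha(w,t)|^2=\tfrac12+\tfrac12\,\ell_{ab}(w,t),
\]
equation \eqref{eq:3.7.20}, shows that $(x,z)$ is critical for $f_{ab}$ precisely when $x$ is critical for the height function $\ell_{ab}$ on $M$ in the direction $h(\bar a,\bar b)$. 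Off a measure-zero set of directions this is a clean $2$-to-$1$ correspondence of critical points; no indices are computed, only totals, and $\beta(h^{-1}(M);{\bf Z}_2)=2\beta(M;{\bf Z}_2)$ finishes it. The argument is purely algebraic once \eqref{eq:3.7.20} is in hand.

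Your approach, by contrast, aims to match Morse indices one by one. The block-diagonality of the Hessian along $\mathcal V\oplus\widehat{TM}$ is not automatic: the horizontal distribution of the Hopf fibration is not integrable, so the O'Neill $A$-tensor is nonzero and in general contributes off-diagonal terms to second derivatives of functions that do not factor through $h$. More seriously, your claim that the horizontal block is ``a conformal copy of the Hessian of $d_{\bar p}|_M$'' needs a justification you have not indicated: $d_p$ is not the pullback of $d_{\bar p}$, so there is no formal reason for their horizontal second derivatives to agree up to a scalar. What actually makes this work is precisely a relation of the type \eqref{eq:3.7.20} (equivalently $\cos\theta=\cos 2\phi$, see Theorem \ref{thm:3.7.3}), which you would end up rederiving. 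If you want to keep the submersion viewpoint, a cheaper fix is to drop the index computation altogether: once you have the $2$-to-$1$ critical-point correspondence for generic $p$, count totals as the paper does and invoke the standard fact that perfection on a dense open set of height functions implies perfection for all nondegenerate ones. That recovers the paper's argument in geometric language and sidesteps the Hessian decoupling entirely.
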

\begin{proof}
Since both $M$ and $h^{-1}(M)$ lie in spheres, tautness of $h^{-1}(M)$ in $S^7$ is equivalent to tightness of
$h^{-1}(M)$ in ${\bf R}^8$, i.e., every nondegenerate linear 
height function in ${\bf R}^8$ has the minimum number
of critical points.  We write linear height functions in ${\bf R}^8$ in the form
\begin{equation} 
\label{eq:3.7.15}
f_{ab} (u,v) = \Re (au + bv) = (\bar{a}, \bar{b}) \cdot (u,v), \quad (a,b) \in S^7.
\end{equation}
This is the height function in the direction $(\bar{a}, \bar{b})$.  We want to determine when the point $(u,v)$
is a critical point of $f_{ab}$.  Without loss of generality, we may assume that $(u,v)$ lies in a local
trivialization of the form \eqref{eq:3.7.13} when making local calculations.  Let $x = (w,t)$ be a point of
$M \subset S^4$, and let 
\begin{displaymath}
(x,z) = (w,t,z)
\end{displaymath}
be a point in the fiber $h^{-1}(x)$.  The tangent space to $h^{-1}(M)$
at $(x,z)$ can be decomposed as $T_x M \times T_z S^3$.  We first locate the critical points of the restriction of 
$f_{ab}$ to the fiber through $(x,z)$.  By equations \eqref{eq:3.7.13} and \eqref{eq:3.7.15}, we have
\begin{eqnarray} 
\label{eq:3.7.16}
f_{ab} (w,t,z) & = & \Re \left(\frac{awz}{\sqrt{2(1-t)}} + bz \sqrt{(1-t)/2} \right) \\
& = & \Re (\alpha (w,t) z) = \alpha (w,t) \cdot \bar{z}, \nonumber
\end{eqnarray}
where
\begin{displaymath}
\alpha (w,t) = \frac{aw}{\sqrt{2(1-t)}} + b \sqrt{(1-t)/2}.
\end{displaymath}
This defines the map $\alpha$ from $S^4$ to ${\bf H}$.  If $Z$ is any tangent vector to $S^3$ at $z$, we write
$Zf_{ab}$ for the derivative of $f_{ab}$ in the direction $(0,Z)$.  Then
\begin{equation} 
\label{eq:3.7.17}
Zf_{ab} = \alpha (w,t) \cdot \bar{Z}
\end{equation}
at $(x,z)$.  Now there are two cases to consider.  First, if $\alpha (w,t) \neq 0$, then in order to have 
$Zf_{ab} = 0$ for all $Z \in T_z S^3$, we must have
\begin{equation} 
\label{eq:3.7.18}
\bar{z} = \pm \frac{\alpha (w,t)}{|\alpha (w,t)|}.
\end{equation}
So the restriction of $f_{ab}$ to the fiber has exactly two critical points with corresponding values
\begin{equation} 
\label{eq:3.7.19}
\pm |\alpha (w,t)|.
\end{equation}
The second case is when $\alpha (w,t) = 0$.  Then the restriction of $f_{ab}$ to the fiber is identically zero
by equation \eqref{eq:3.7.16}.  In both cases the function,
\begin{displaymath}
g_{ab} (w,t) = |\alpha (w,t)|^2,
\end{displaymath}
satisfies the equation
\begin{displaymath}
g_{ab} (w,t) = f_{ab}^2 (w,t,z),
\end{displaymath}
at the critical point.  The key in relating this fact to information about the submanifold $M$ is to note that
\begin{eqnarray} 
\label{eq:3.7.20}
g_{ab} (w,t) & = & |\alpha (w,t)|^2 = \frac{1}{2} \Re \{2 a \bar{b} w + (|a|^2 - |b|^2)t\} + \frac{1}{2}(|a|^2 + |b|^2)
\nonumber \\
 & = & \frac{1}{2} + \frac{1}{2} ((w,t) \cdot (2 \bar{a} b, |a|^2 - |b|^2)) \\
& = & \frac{1}{2} + \frac{1}{2} \ell_{ab} (w,t), \nonumber
\end{eqnarray}
where $\ell_{ab}$ is the linear height function on ${\bf R}^5$ in the direction
\begin{displaymath}
(2\bar{a} b, |a|^2 - |b|^2) = h (\bar{a}, \bar{b}).
\end{displaymath}
This shows that $g_{ab} (w,t) = 0$ if and only if $(w,t) = - h(\bar{a}, \bar{b})$.  Thus, if $- h(\bar{a}, \bar{b})$
is not in $M$, the restriction of $f_{ab}$ to each fiber has exactly two critical points of the form $(x,z)$, with $z$
as in equation \eqref{eq:3.7.18}.  For $X \in T_x M$, we write $Xf_{ab}$ for the derivative of $f_{ab}$ in the direction
$(X,0)$.  At the two critical points, we have
\begin{equation} 
\label{eq:3.7.21}
Xf_{ab} = d\alpha (X) \cdot \bar{z},
\end{equation}
\begin{equation} 
\label{eq:3.7.22}
Xg_{ab} = 2 d\alpha (X) \cdot \alpha (x) = \pm 2|\alpha (x)| (d\alpha (X)\cdot \bar{z}) = \pm 2|\alpha (X)| Xf_{ab}.
\end{equation}
Thus $(x,z)$ is a critical point of $f_{ab}$ if and only if $x$ is a critical point of $g_{ab}$.  By equation
\eqref{eq:3.7.20}, this happens precisely when $x$ is a critical point of $\ell_{ab}$.  We conclude that if 
$- h(\bar{a}, \bar{b})$ is not in $M$, then $f_{ab}$ has two critical points for every critical point of 
$\ell_{ab}$ on $M$.  The set of points $(a,b)$ in $S^7$ such that $- h(\bar{a}, \bar{b})$ belongs to $M$ has measure
zero.  If $(a,b)$ is not in this set, then $f_{ab}$ has twice as many critical points as the height function
$\ell_{ab}$ on $M$.  Since $M$ is taut, every nondegenerate 
height function $\ell_{ab}$ has $\beta (M;{\bf Z}_2)$ critical
points on $M$, where $\beta (M;{\bf Z}_2)$ is the sum of the 
${\bf Z}_2$-Betti numbers of $M$.  
Thus, except for a set of measure zero, every height function $f_{ab}$
has $2 \beta (M;{\bf Z}_2)$ critical points on $h^{-1}(M)$.  Since $h^{-1}(M)$ is diffeomorphic to $M \times S^3$,
we have 
\begin{displaymath}
\beta (h^{-1}(M);{\bf Z}_2) = \beta (M \times S^3;{\bf Z}_2) = 2 \beta (M;{\bf Z}_2).
\end{displaymath}
Thus, $h^{-1}(M)$ is taut in $S^7$. 
\end{proof}

We next use Theorem \ref{thm:3.7.1} to show that the inverse image under $h$ of a compact 
proper Dupin submanifold in
$S^4$ is proper Dupin.  Recall that a submanifold $M$ of codimension greater than one is proper Dupin if the Legendre
lift of $M$ is proper Dupin, as defined in Section \ref{sec:2}.

A taut submanifold is always Dupin,
but it may not be proper Dupin, as shown by Pinkall \cite{P5}, and by
Miyaoka \cite{Mi2}, independently,
for hypersurfaces,
i.e., the number of distinct principal curvatures may not be constant on the 
unit normal bundle $B(M)$.  

Ozawa \cite{Oz} proved that a taut 
submanifold
$M \subset S^n$  is proper Dupin if and only if every
connected component of a critical set of a linear height function on $M$ is a point or is homeomorphic to a sphere
of some dimension $k$.  (See also Hebda \cite{Heb3}.)  
This result is a key fact in the proof of the following theorem. 

\begin{theorem}
\label{thm:3.7.2} 
Let $M$ be a compact, connected proper Dupin
submanifold embedded in $S^4$.  Then $h^{-1}(M)$ is a proper Dupin
submanifold in $S^7$.
\end{theorem}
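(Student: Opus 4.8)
The plan is to deduce the result from Theorem~\ref{thm:3.7.1} together with the characterizations of proper Dupin submanifolds quoted just above. Since $M$ is a compact, connected proper Dupin submanifold of $S^4$, it is taut in $S^4$ (Thorbergsson's theorem from Section~\ref{sec:1} in the hypersurface case, which is the case needed for the Miyaoka--Ozawa examples, together with its counterpart for submanifolds of higher codimension). Hence Theorem~\ref{thm:3.7.1} applies and $h^{-1}(M)$ is taut in $S^7$. By Ozawa's theorem it then suffices to prove that every connected component of the critical set of every linear height function $f_{ab}$, $(a,b)\in S^7$, on $h^{-1}(M)$ is either a point or is homeomorphic to a sphere.

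The key is that the proof of Theorem~\ref{thm:3.7.1} already identifies these critical sets. From that proof, in a trivialization~\eqref{eq:3.7.13}, a point $(x,z)$ of $h^{-1}(M)$ is critical for $f_{ab}$ if and only if $x$ is critical for $\ell_{ab}$ on $M$ and, moreover, $z=\pm\overline{\alpha(x)}/|\alpha(x)|$ when $\alpha(x)\neq0$, while $z$ is unrestricted when $\alpha(x)=0$; and $\alpha(x)=0$ occurs only at the single point $x_0=-h(\bar a,\bar b)$. Let $C$ be a connected component of the critical set of $\ell_{ab}$ on $M$. Since $M$ is taut and proper Dupin, Ozawa's theorem applied to $M$ gives that $C$ is a point or a sphere; moreover $\ell_{ab}$, and hence $g_{ab}=|\alpha|^2=\frac12+\frac12\ell_{ab}$, is constant on $C$, so either $\alpha\equiv0$ on $C$, which forces $C=\{x_0\}$, or $|\alpha|$ equals a positive constant on $C$. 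In the first case the part of the critical set of $f_{ab}$ over $C$ is the entire Hopf fibre $h^{-1}(x_0)\cong S^3$, a single sphere. In the second case it is the union of the two globally defined, disjoint sections $x\mapsto(x,\overline{\alpha(x)}/|\alpha(x)|)$ and $x\mapsto(x,-\overline{\alpha(x)}/|\alpha(x)|)$ over $C$; being disjoint, compact, and each a continuous image of the connected set $C$, these are exactly two connected components of the critical set, and each is homeomorphic to $C$, hence a point or a sphere. Since the critical set of $f_{ab}$ projects onto that of $\ell_{ab}$, no connected component of it can meet the portions lying over two distinct components $C$. Running over all $C$, we conclude that every connected component of the critical set of $f_{ab}$ on $h^{-1}(M)$ is a point or a sphere, and Ozawa's theorem now gives that $h^{-1}(M)$ is proper Dupin.

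The step I expect to be the main obstacle is the bookkeeping for the exceptional directions, namely those $(a,b)$ with $x_0=-h(\bar a,\bar b)\in M$, where an entire $S^3$-fibre becomes critical: one must be certain that the critical component $C$ of $\ell_{ab}$ passing through $x_0$ is the single point $\{x_0\}$ --- which holds because $\ell_{ab}$ attains its absolute minimum over all of $S^4$ at the unique point $x_0$ --- so that its preimage is the sphere $S^3$ rather than a nontrivial $S^3$-bundle over a positive-dimensional sphere, which would violate Ozawa's criterion. A minor additional technicality is that the map $\alpha$ and the trivialization~\eqref{eq:3.7.13} are valid only away from $(0,1)\in S^4$; if $(0,1)\in M$, the computation of the critical set near that point must be redone in the alternative trivialization obtained by multiplying~\eqref{eq:3.7.10} by $\bar u$ on the left, but the structure of the argument is unchanged.
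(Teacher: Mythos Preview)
Your overall strategy is exactly the paper's: use tautness of $M$ (via Thorbergsson/Pinkall) and Theorem~\ref{thm:3.7.1} to get tautness of $h^{-1}(M)$, then verify Ozawa's criterion. The non-exceptional case ($\alpha(x)\neq 0$) is handled correctly and agrees with the paper.

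However, your treatment of the exceptional point $x_0=-h(\bar a,\bar b)$ contains a genuine error. You assert that when $\alpha(x_0)=0$ the variable $z$ is unrestricted, i.e.\ the entire Hopf fibre $h^{-1}(x_0)\cong S^3$ consists of critical points of $f_{ab}$. This is false. Vanishing of $\alpha(x_0)$ only kills the \emph{vertical} derivative $Zf_{ab}=\alpha(x_0)\cdot\bar Z$; the \emph{horizontal} derivative is still $Xf_{ab}=d\alpha(X)\cdot\bar z$ (equation~\eqref{eq:3.7.23}), and this does not vanish for all $X\in T_{x_0}M$ unless $\bar z$ is orthogonal to the image $d\alpha(T_{x_0}M)\subset{\bf H}$. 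Thus the critical set inside the fibre is only the great sphere
\[
\{z\in S^3:\ \bar z\perp d\alpha(T_{x_0}M)\},
\]
not all of $S^3$. The paper pins down its dimension as $3-\dim M$ by showing that $d\alpha$ has full rank at $x_0$: since $x_0$ is an isolated critical point of $\ell_{ab}$ on the taut submanifold $M$, Ozawa's theorem forces it to be nondegenerate, hence the Hessian $H(X,Y)=2\,d\alpha(X)\cdot d\alpha(Y)$ of $g_{ab}$ is nondegenerate, and so $\mathrm{rank}\,d\alpha_{x_0}=\dim M$.

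Your proof can be repaired: once you recognize that the critical set in the fibre is $\{\bar z\perp d\alpha(T_{x_0}M)\}$, it is automatically a great sphere of some dimension $3-\mathrm{rank}\,d\alpha_{x_0}\geq 0$ (since $\dim M\leq 3$), which is all Ozawa's criterion requires. The paper's Hessian argument gives the exact dimension, which is not strictly needed here but is used in the proof of Theorem~\ref{thm:3.7.3}. The point you flagged as ``the main obstacle'' (whether $C=\{x_0\}$) is not the real issue; the real issue is the one you missed, namely that $z$ is constrained even over $x_0$.
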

\begin{proof}
As we have noted before, Thorbergsson
\cite{Th1} proved that a compact proper Dupin hypersurface embedded in $S^n$
is taut, and Pinkall
\cite{P5} extended this result to the case where $M$ has codimension greater than one and the
number of distinct principal curvatures is constant on the unit normal 
bundle $B(M)$.  Thus, our $M$ is taut in
$S^4$, and therefore $h^{-1}(M)$ is taut in $S^7$ by Theorem \ref{thm:3.7.1}.  To complete the proof of the theorem,
we need to show that each connected component of a critical set of a height function $f_{ab}$ on $h^{-1}(M)$ is
a point or a sphere.

We use the same notation as in the proof of Theorem \ref{thm:3.7.1}.  Now suppose that $(x,z)$ is a critical point of
$f_{ab}$.  For $X \in T_xM$, we compute from equation \eqref{eq:3.7.16} that
\begin{equation} 
\label{eq:3.7.23}
Xf_{ab} = d\alpha (X) \cdot \bar{z}.
\end{equation}
From \eqref{eq:3.7.22}, we see that $Xg_{ab}$ also equals zero, and the argument again splits into two cases, depending
on whether or not $g_{ab}(x)$ is zero.  If $g_{ab}(x)$ is nonzero, then there are two critical points of 
$f_{ab}$ on the fiber $h^{-1}(x)$.  Thus a component in $h^{-1}(M)$ of the critical set of $f_{ab}$ through $(x,z)$
is homeomorphic to the corresponding component of the critical set containing $x$ of the linear function
$\ell_{ab}$ on $M$.  Since $M$ is proper Dupin, such a component is a point or a sphere.

The second case is when $g_{ab}(x) = f_{ab}^2 (x,z) = 0$.  As we have seen, this happens only if 
$x = - h(\bar{a}, \bar{b})$.  In that case, $x$ is an isolated absolute minimum of the height function $\ell_{ab}$.
Thus, the corresponding component of the critical set of $f_{ab}$ through $(x,z)$ lies in the fiber $h^{-1}(x)$, 
which is diffeomorphic to $S^3$.  From equation \eqref{eq:3.7.23}, we see that this component of the critical set
consists of those points $(x,y)$ in the fiber such that $\bar{y}$ is orthogonal to $d \alpha (X)$, for all
$X \in T_xM$.  We know that 
\begin{equation} 
\label{eq:3.7.24}
g_{ab}(x) = \frac{1}{2} + \frac{1}{2} \ell_{ab} (x),
\end{equation}
and $x$ is an isolated critical point of $\ell_{ab}$ on $M$.  The tautness of $M$ and the results of Ozawa \cite{Oz}
imply that $x$ is a nondegenerate critical point of $\ell_{ab}$, since the component of the critical set
of a height function
containing a degenerate critical point must be a sphere of dimension greater than zero.
By equation \eqref{eq:3.7.24}, $x$ is also a nondegenerate critical point of $g_{ab}$, and so the Hessian $H(X,Y)$
of $g_{ab}$ is nondegenerate at $x$.  Since $\alpha (x) = 0$, we compute that for $X$ and $Y$ in $T_xM$,
\begin{displaymath}
H(X,Y) = 2 d \alpha (X) \cdot d \alpha (Y).
\end{displaymath}
Hence, $d \alpha$ is nondegenerate at $x$, and the 
rank of $d \alpha$ is the dimension of $M$.  From this it follows
that the component of the critical set of $f_{ab}$ through $(x,z)$ is a sphere in $h^{-1}(x)$ of dimension
$(3 - \dim M)$.  Therefore, we have shown that every component of the critical set of a linear height
function $f_{ab}$ on $h^{-1}(M)$ is homeomorphic to a point or a sphere.  Thus, $h^{-1}(M)$ is proper Dupin.
\end{proof}

Next, we relate the principal curvatures of $h^{-1}(M)$ to those of $M$.

\begin{theorem}
\label{thm:3.7.3} 
Let $M$ be a compact, connected proper Dupin hypersurface embedded in $S^4$ with $g$ principal curvatures.  
Then the proper Dupin hypersurface $h^{-1}(M)$ in $S^7$ has $2g$ principal curvatures.  Each principal curvature,
\begin{displaymath}
\lambda = \cot \theta, \quad 0 < \theta < \pi,
\end{displaymath}
of $M$ at a point $x \in M$ yields two principal curvatures of 
$h^{-1}(M)$ at points in $h^{-1}(x)$ with values
\begin{displaymath}
\lambda^{+} = \cot (\theta / 2), \quad \lambda^{-} = \cot ((\theta + \pi) / 2).
\end{displaymath}
\end{theorem}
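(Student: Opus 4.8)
The plan is to pin down the principal curvatures of $h^{-1}(M)$ by tracking focal points along its normal geodesics and matching them, through the Hopf map, with focal points of $M$. First note that $h^{-1}(M)$ is a compact hypersurface of $S^7$ of dimension $\dim M+3=6$, and it is proper Dupin by Theorem~\ref{thm:3.7.2}; in particular, at each of its points the distinct principal curvatures have multiplicities summing to $6$. Fix $p\in h^{-1}(M)$, put $x=h(p)\in M$, choose a (local) unit normal $\xi$ to $M$ at $x$, and let $\xi^{*}$ be the unit normal to $h^{-1}(M)$ at $p$ that is horizontal for the Hopf submersion. Such a $\xi^{*}$ exists and is unique up to sign, since the vertical (fiber) directions are tangent to $h^{-1}(M)$, the remaining tangent directions are horizontal lifts of $T_xM$, and the single remaining horizontal direction of $S^7$ at $p$ projects onto the normal line of $M$ at $x$.

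The geometric heart of the argument is the identity
\begin{equation}
\label{eq:hopf-ng}
h\bigl(\cos s\ p+\sin s\ \xi^{*}\bigr)=\cos 2s\ x+\sin 2s\ \xi=f_{2s}(x),\qquad s\in{\bf R},
\end{equation}
i.e.\ $h$ carries the normal geodesic of $h^{-1}(M)$ at $p$ onto the normal geodesic \eqref{eq:3.4.2} of $M$ at $x$, traversed at twice the speed. I would prove \eqref{eq:hopf-ng} by a direct quaternionic computation with \eqref{eq:3.7.9}: substituting the great circle $s\mapsto\cos s\,p+\sin s\,\xi^{*}$ into \eqref{eq:3.7.9}, using horizontality of $\xi^{*}$ and the double-angle formulas, shows that $h(\cos s\,p+\sin s\,\xi^{*})$ is the great circle through $x$ with initial velocity $dh_p(\xi^{*})$, traversed at double speed; and since $\xi^{*}$ is normal to $h^{-1}(M)$, the vector $dh_p(\xi^{*})$ is a nonzero multiple of $\xi$, which after normalization yields \eqref{eq:hopf-ng}. (Alternatively: $h\colon S^7(1)\to S^4(\tfrac12)$ is a Riemannian submersion, so a horizontal geodesic projects to a geodesic, and rescaling the base to the unit sphere doubles the speed.) Letting $E^{*}$ and $E$ be the normal exponential maps of $h^{-1}(M)\subset S^7$ and of $M\subset S^4$, identity \eqref{eq:hopf-ng} says $h\circ E^{*}=E\circ\psi$, where $\psi$ acts on the (line) normal bundle of $h^{-1}(M)$ by applying $h$ on the base and doubling the normal coordinate.

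Now I carry out the multiplicity count. Fix $s_0\in(0,\pi)$, set $q=\cos s_0\,p+\sin s_0\,\xi^{*}$, and let $m$ be the multiplicity of $\cot 2s_0$ as a principal curvature of $M$ at $x$ (with $m=0$ if $\cot 2s_0$ is not a principal curvature). Differentiating $h\circ E^{*}=E\circ\psi$: the map $d\psi$ is onto with $3$-dimensional kernel (the Hopf-vertical directions), so $dh\circ dE^{*}=dE\circ d\psi$ has rank $\operatorname{rank}(dE)=4-m$, hence $\ker(dh\circ dE^{*})$ has dimension $3+m$; since $dE^{*}$ maps $\ker(dh\circ dE^{*})$ into the $3$-dimensional space $\ker dh_q$ with kernel exactly $\ker dE^{*}$, we obtain
\begin{equation}
\label{eq:mult-ineq}
\operatorname{mult}_{h^{-1}(M)}(\cot s_0)=\dim\ker dE^{*}\ \ge\ (3+m)-3=m .
\end{equation}
By \eqref{eq:3.4.3}, for each of the $g$ principal curvatures $\cot\theta_i$ of $M$ at $x$ (ordered by $0<\theta_1<\dots<\theta_g<\pi$, with multiplicities $m_i$), the normal geodesic $f_t$ of $M$ at $x$ has a focal point of multiplicity $m_i$ at $t=\theta_i$ and at $t=\theta_i+\pi$, and nowhere else in $(0,2\pi)$. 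Applying \eqref{eq:mult-ineq} at $s_0=\theta_i/2$ and $s_0=(\theta_i+\pi)/2$ shows that $h^{-1}(M)$ has at $p$ the principal curvatures $\cot(\theta_i/2)$ and $\cot((\theta_i+\pi)/2)$, each of multiplicity $\ge m_i$. These $2g$ numbers are pairwise distinct, since $\cot(\theta_i/2)\in(0,\infty)$ while $\cot((\theta_i+\pi)/2)\in(-\infty,0)$, and within each group distinctness follows from injectivity of $\cot$ on $(0,\pi/2)$ and on $(\pi/2,\pi)$. Their multiplicities therefore sum to at least $2\sum_i m_i=2\dim M=6=\dim h^{-1}(M)$; as the multiplicities of all principal curvatures of the hypersurface $h^{-1}(M)$ sum to exactly $6$, there are no further principal curvatures and all the inequalities are equalities. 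Hence $h^{-1}(M)$ has exactly the $2g$ principal curvatures $\cot(\theta_i/2)$, $\cot((\theta_i+\pi)/2)$, with multiplicities $m_i$; and since $\cot(\theta/2)=(1+\cos\theta)/\sin\theta$ and $\cot((\theta+\pi)/2)=(\cos\theta-1)/\sin\theta$, these are precisely the values $\lambda^{+}$ and $\lambda^{-}$ in the statement.

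The step I expect to be the main obstacle is the bookkeeping around \eqref{eq:mult-ineq}: one must be sure the rank--nullity inequality points in the direction actually needed and that the $3$-dimensional Hopf fiber is accounted for exactly once on each side --- it is precisely the fiber directions that split one focal point of $M$ into two focal points of $h^{-1}(M)$ and so double the number of principal curvatures. A more computational alternative is to compute the shape operator of $h^{-1}(M)$ at $p$ directly: with respect to the splitting of $T_p h^{-1}(M)$ into Hopf-vertical vectors and horizontal lifts of $T_xM$ it has the block form $\left(\begin{smallmatrix}0 & B^{T}\\ B & 2A_{\xi}\end{smallmatrix}\right)$, where $A_{\xi}$ is the shape operator of $M$, the vanishing block reflects that the Hopf fibers are totally geodesic great $3$-spheres of $S^7$, and $B$ is an isometry coming from the O'Neill integrability tensor of the Hopf fibration (whose norm is fixed by the Gray--O'Neill identity $K_{S^4(1/2)}=K_{S^7(1)}+3|A|^{2}$); the characteristic polynomial is then $\prod_i(\mu^{2}-2\lambda_i\mu-1)$, with roots $\lambda_i\pm\sqrt{\lambda_i^{2}+1}=\cot(\theta_i/2),\,\cot((\theta_i+\pi)/2)$. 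On that route the obstacle is establishing the precise block form and the isometry property of $B$.
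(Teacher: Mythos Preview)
Your proof is correct and takes a genuinely different route from the paper's. The paper stays inside the height-function framework of Theorems~\ref{thm:3.7.1} and~\ref{thm:3.7.2}: it identifies focal points of $h^{-1}(M)$ with degenerate critical points of the linear functions $f_{ab}$, invokes tautness and Ozawa's criterion (via Theorem~\ref{thm:3.7.2}) to match degeneracy of $f_{ab}$ at $(x,z)$ with degeneracy of $\ell_{ab}$ at $x$, and then reads off the double-covering relation $\cos\theta=\cos 2\phi$ from the identity $g_{ab}(x)=f_{ab}^{2}(x,z)=\tfrac12+\tfrac12\ell_{ab}(x)$ of equation~\eqref{eq:3.7.20}. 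You bypass the Morse-theoretic layer entirely: your identity~\eqref{eq:hopf-ng} obtains the double-covering of normal geodesics directly from the Riemannian-submersion behavior of the Hopf map, your rank--nullity argument on the normal exponential maps yields the multiplicity inequality~\eqref{eq:mult-ineq}, and the dimension count closes it to an equality. The paper's approach is short because the machinery is already in place and it makes the role of tautness explicit; yours is more self-contained, delivers the multiplicities of $\lambda^{\pm}$ as a byproduct, and in fact the focal-point computation would go through for an arbitrary oriented hypersurface $M\subset S^4$ (only the ``proper Dupin'' conclusion for $h^{-1}(M)$ needs Theorem~\ref{thm:3.7.2}). Your alternative block-form computation of the shape operator is a third valid route and the most explicit of all; the identification of the off-diagonal block with the O'Neill tensor is exactly the delicate point you flag.
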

\begin{proof}
A principal curvature $\lambda = \cot \theta$ of a hypersurface $M$ at $x$ corresponds to a 
focal point at oriented distance
$\theta$ along the normal geodesic to $M$ at $x$. 
(See, for example, Cecil--Ryan \cite[p. 127]{CR7}.)  A point $(x,z)$
in $h^{-1}(M)$ is a critical point of $f_{ab}$ if and only if $(\bar{a}, \bar{b})$ lies along the normal geodesic to
$h^{-1}(M)$ at $(x,z)$.  The critical point is degenerate if and only if $(\bar{a}, \bar{b})$ is a focal point of
$h^{-1}(M)$ at $(x,z)$.  Note further that $(x,z)$ is a degenerate critical point of $f_{ab}$ if and only if $x$
is a degenerate critical point of $\ell_{ab}$.  This follows from the fact that both embeddings are taut, and the
dimensions of the components of the critical sets agree by Theorem \ref{thm:3.7.2}.  The latter claim holds even
when $x = - h(\bar{a}, \bar{b})$, since the fact that $M$ has dimension three implies that the critical point
$(x,z)$ of $f_{ab}$ is isolated.  Thus, $(\bar{a}, \bar{b})$ is a focal point of $h^{-1}(M)$ if and only if
$h(\bar{a}, \bar{b})$ is a focal point of $M$.  Suppose now that $(\bar{a}, \bar{b})$ lies along the normal geodesic to
$h^{-1}(M)$ at $(x,z)$ and that $f_{ab}(x,z) = \cos \phi$.  Then by equation \eqref{eq:3.7.20},
\begin{displaymath}
g_{ab}(x) = \frac{1}{2} + \frac{1}{2} \ell_{ab} (x) = \frac{1}{2} + \frac{1}{2} \cos \theta,
\end{displaymath}
where $\theta$ is the distance from $h(\bar{a}, \bar{b})$ to $x$.  Since $(x,z)$ is a critical point of $f_{ab}$,
we have $g_{ab}(x) = f_{ab}^2 (x,z)$. Thus,
\begin{displaymath}
\frac{1}{2} + \frac{1}{2} \cos \theta = \cos^2 \phi = \frac{1}{2} + \frac{1}{2} \cos 2 \phi,
\end{displaymath}
and so $\cos \theta = \cos 2 \phi$.  This means that under the map $h$, the normal geodesic to 
$h^{-1}(M)$ at $(x,z)$ double covers the normal geodesic to $M$ at $x$, since the points corresponding to the
values $\phi = \theta / 2$ and $\phi = (\theta + \pi) / 2$ are mapped to the same point by $h$.  In particular,
a focal point corresponding to a principal curvature $\lambda = \cot \theta$ on the normal geodesic to $M$ at $x$
gives rise to two focal points on the normal geodesic to $h^{-1}(M)$ at $(x,z)$ with corresponding principal curvatures
\begin{displaymath}
\lambda^{+} = \cot (\theta / 2), \quad \lambda^{-} = \cot ((\theta + \pi) / 2).
\end{displaymath}
This completes the proof of the theorem.
\end{proof}

We now construct the examples of Miyaoka and Ozawa.  Recall that a compact proper 
Dupin hypersurface $M$ in $S^4$ with two principal curvatures must be a cyclide of Dupin, 
that is, the image under
a M\"{o}bius transformation of $S^4$ of a standard product of spheres,
\begin{displaymath}
S^1 (r) \times S^2 (s) \subset S^4 (1) \subset {\bf R}^5, \quad r^2 + s^2 = 1.
\end{displaymath}
A conformal, non-isometric image of an isoparametric cyclide does not have constant principal curvatures.  Similarly,
a compact proper Dupin
hypersurface in $S^4$ with three principal curvatures must be 
Lie equivalent to an isoparametric
hypersurface in $S^4$ with three principal curvatures, but it need not have constant principal curvatures itself.

\begin{corollary}
\label{cor:3.7.4} 
Let $M$ be a non-isoparametric compact, connected proper Dupin hypersurface embedded in $S^4$ with $g$ principal 
curvatures, where $g= 2$ or 3.  Then $h^{-1}(M)$ is a compact, connected proper Dupin hypersurface in $S^7$
with $2g$ principal curvatures that does not have constant Lie curvatures.  Therefore, it is not Lie equivalent to an isoparametric hypersurface in $S^7$.
\end{corollary}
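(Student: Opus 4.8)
The plan is to combine Theorem \ref{thm:3.7.2} and Theorem \ref{thm:3.7.3} with the known classification of compact proper Dupin hypersurfaces in $S^4$ with $g=2$ or $3$ principal curvatures. First, by Theorem \ref{thm:3.7.2}, since $M \subset S^4$ is a compact, connected proper Dupin hypersurface, $h^{-1}(M)$ is a proper Dupin submanifold in $S^7$; and since $M$ has codimension one in $S^4$ and is not all of $S^4$, the local-triviality discussion after equation \eqref{eq:3.7.13} shows $h^{-1}(M)$ is diffeomorphic to $M \times S^3$, a $6$-dimensional manifold, hence a hypersurface in $S^7$. By Theorem \ref{thm:3.7.3}, $h^{-1}(M)$ has $2g$ principal curvatures, so $g=2$ gives four principal curvatures on $h^{-1}(M)$ and $g=3$ gives six. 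The remaining task is to compute a Lie curvature of $h^{-1}(M)$ at a point and show it is non-constant.

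The key computation uses the explicit relation in Theorem \ref{thm:3.7.3}: a principal curvature $\lambda = \cot\theta$ of $M$ at $x$ produces principal curvatures $\lambda^{+} = \cot(\theta/2)$ and $\lambda^{-} = \cot((\theta+\pi)/2) = -\tan(\theta/2)$ of $h^{-1}(M)$. So I would fix a point $x \in M$ where the principal curvatures are $\mu_1 = \cot\theta_1 < \cdots < \cot\theta_g = \mu_g$ (with $0 < \theta_1 < \cdots < \theta_g < \pi$), list the $2g$ values $\cot(\theta_i/2)$ and $-\tan(\theta_i/2)$, put them in ascending order, and form the relevant cross-ratio(s). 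One checks that the resulting Lie curvature of $h^{-1}(M)$ is a nonconstant function of the $\theta_i$, hence takes at least two distinct values as $x$ ranges over $M$. The crucial input is that $M$ is \emph{not} isoparametric: by the classification recalled just before the corollary, a compact proper Dupin $M \subset S^4$ with $g=2$ is a cyclide, M\"obius-equivalent to a product $S^1(r)\times S^2(s)$, and a conformal non-isometric image of an isoparametric cyclide does \emph{not} have constant principal curvatures; similarly for $g=3$, $M$ is Lie equivalent to an isoparametric hypersurface but need not have constant principal curvatures. So the $\theta_i(x)$ genuinely vary with $x$, and since $M$ is connected the set of values of the Lie curvature is a nondegenerate interval, in particular not a single point.

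For the case $g=2$ one gets four principal curvatures $\cot(\theta_1/2), \cot(\theta_2/2), -\tan(\theta_2/2), -\tan(\theta_1/2)$ of $h^{-1}(M)$; ordering them and computing $\Psi = [\kappa_1,\kappa_2;\kappa_3,\kappa_4]$ via \eqref{eq:3.5.4} gives an explicit expression in $\theta_1,\theta_2$ that is visibly nonconstant when the $\theta_i$ vary — and in fact one should verify it is not identically $1/2$, which already rules out Lie equivalence to an isoparametric hypersurface with $g=4$. For $g=3$ one similarly obtains six principal curvatures and picks any four of them to exhibit a nonconstant cross-ratio. The main obstacle I anticipate is purely bookkeeping: correctly ordering the $2g$ values $\cot(\theta_i/2)$ and $-\tan(\theta_i/2)$ (note all the $\cot(\theta_i/2)$ are positive-to-negative as $\theta_i$ crosses $\pi/2$, while $-\tan(\theta_i/2) < 0$ always when $0<\theta_i<\pi$, so the negatives and positives interleave in a way that depends on where the $\theta_i$ sit relative to $\pi/2$) and then checking that the chosen cross-ratio really does depend on $x$. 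Since we only need nonconstancy, it suffices to evaluate the Lie curvature at two conveniently chosen points of $M$ — e.g. compare a point where $M$ looks most symmetric with one where the conformal distortion is largest — and observe the two values differ. Finally, since any isoparametric hypersurface has constant principal curvatures and hence constant Lie curvatures, the nonconstancy of $\Psi$ on $h^{-1}(M)$ shows, via Miyaoka's theorem on the Lie invariance of Lie curvatures (Section \ref{sec:2}), that $h^{-1}(M)$ is not Lie equivalent to an isoparametric hypersurface in $S^7$.
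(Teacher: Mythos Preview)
Your plan is correct and follows the same route as the paper: invoke Theorems \ref{thm:3.7.2} and \ref{thm:3.7.3}, then exhibit a nonconstant cross-ratio among the induced principal curvatures. The one place where you make life harder than necessary is the ``bookkeeping'' of ordering the $2g$ values. The paper sidesteps this entirely: since by definition \emph{any} cross-ratio of four distinct principal curvatures is a Lie curvature (see \eqref{eq:3.5.5}), there is no need to order them first. The paper simply takes two principal curvatures $\lambda=\cot\theta$, $\mu=\cot\alpha$ of $M$, forms the particular cross-ratio
\[
\Psi \;=\; \frac{(\lambda^{+}-\lambda^{-})(\mu^{+}-\mu^{-})}{(\lambda^{+}-\mu^{-})(\mu^{+}-\lambda^{-})}
\;=\;\frac{2}{1+\cos(\theta-\alpha)},
\]
and observes that this is nonconstant because $\theta-\alpha$ is. This single closed-form computation replaces your case analysis for $g=2$ and $g=3$ simultaneously (for $g=3$ one just picks two of the three principal curvatures), and it makes the ``evaluate at two convenient points'' step unnecessary. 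The nonconstancy of this one cross-ratio already shows $h^{-1}(M)$ cannot be Lie equivalent to an isoparametric hypersurface, so the separate check that $\Psi\neq 1/2$ is also not needed.
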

\begin{proof}
Suppose that $\lambda = \cot \theta$ and $\mu = \cot \alpha$ are two distinct nonconstant principal curvature
functions on $M$.  Let
\begin{eqnarray}
\lambda^{+} & = & \cot (\theta / 2), \quad \lambda^{-} = \cot ((\theta + \pi) / 2), \nonumber \\
\mu^{+} & = & \cot (\alpha / 2), \quad \mu^{-} = \cot ((\alpha + \pi) / 2), \nonumber
\end{eqnarray}
be the four distinct principal curvature functions on $h^{-1}(M)$ induced from $\lambda$ and $\mu$.  Then the Lie curvature
\begin{displaymath}
\Psi = \frac{(\lambda^{+} - \lambda^{-})(\mu^{+} - \mu^{-})}{(\lambda^{+} - \mu^{-})(\mu^{+} - \lambda^{-})}
= \frac{2}{1 + \cos (\theta - \alpha)}, 
\end{displaymath}
is not constant on $h^{-1}(M)$, and therefore $h^{-1}(M)$ is not Lie equivalent to an isoparametric hypersurface
in $S^7$. 
\end{proof}

Certain parts of the construction of Miyaoka and Ozawa
are also valid if ${\bf H}$ is replaced by the Cayley
numbers
or a more general Clifford algebra.  
See the paper of Miyaoka and Ozawa  \cite{MO} for a discussion of this point.\\

Miyaoka \cite{Mi3} proved that the assumption that the Lie curvature $\Psi$ is constant on a 
compact, connected proper Dupin hypersurface $M$ in $S^n$ with four principal curvatures, 
together with an additional assumption regarding the intersections
of leaves of the various principal foliations, implies that $M$ is Lie equivalent to an isoparametric
hypersurface.  (Miyaoka \cite{Mi4} also proved a similar result for compact proper Dupin hypersurfaces with six principal 
curvatures.) 
Thorbergsson\cite{Th1} (see also Stolz \cite{Stolz}
and Grove--Halperin \cite{GH}) proved
that for a compact proper Dupin hypersurface 
in $S^n$ with four principal curvatures, 
the multiplicities
of the principal curvatures must satisfy $m_1 = m_2, m_3 = m_4$, when the principal
curvatures are appropriately ordered.

Then Cecil, Chi and Jensen \cite{CCJ2} used a different approach than Miyaoka
to prove that if $M$ is a
compact, connected proper Dupin hypersurface in $S^n$ with four principal curvatures whose multiplicities
satisfy $m_1 = m_2 \geq 1$, $m_3 = m_4 =1$ and constant Lie curvature $\Psi$, then $M$ is Lie equivalent to an 
isoparametric hypersurface.  Thus, Miyaoka's additional assumption regarding the intersections
of leaves of the various principal foliations is not needed in that case.  

It remains an open question whether
Miyaoka's additional assumption can be removed in the case where $m_3 = m_4$ is also allowed to be greater than one,
although this has been conjectured to be true by 
Cecil and Jensen \cite[pp. 3--4]{CJ3}.

\noindent Thomas E. Cecil

\noindent Department of Mathematics and Computer Science

\noindent College of the Holy Cross

\noindent Worcester, MA 01610

\noindent email: tcecil@holycross.edu\\

\end{document}